\def\input@path{{../figures/}{../bibliography/}}
\newcommand{\eps}{{\varepsilon}}
\renewcommand{\eqref}[1]{Eq.~(\ref{eq:#1})}
\DeclareMathOperator{\trace}{trace}
\newtheorem{thm}{Theorem}
\newtheorem{propn}{Proposition}
\newcommand{\TheTitle}{Scalable computation of  Jordan chains} 
\newcommand{\TheAuthors}{F. Hern{\'a}ndez, A. Pick, and S. G. Johnson}
\headers{\TheTitle}{\TheAuthors}
\title{{\TheTitle}}
\author{
  Felipe Hern{\'a}ndez\thanks{Department of Mathematics, Massachusetts Institute of Technology, 77 Massachusetts Avenue, Cambridge, Massachusetts 02139, USA}
  \and
  Adi Pick\thanks{Department of Physics, Harvard University, Cambridge, Massachusetts 02138, USA (adipick@physics.harvard.edu)}
  \and
  Steven G.  Johnson$^*$}
\begin{document}

\maketitle

\begin{abstract}
 We present an algorithm to  compute the Jordan chain of a nearly defective matrix with a $2\times2$ Jordan block.  
 The algorithm is based on  an inverse-iteration procedure and only needs information about the invariant subspace corresponding to the Jordan chain, making it suitable for use with large matrices arising in applications, in contrast with existing algorithms which rely on an SVD. The algorithm produces the  eigenvector and Jordan vector with $O(\eps)$ error, with $\eps$ being the distance of the given matrix to an exactly defective matrix.    As an example, we demonstrate the use of this algorithm in a problem arising from electromagnetism, in which the matrix has size $212^2\times 212^2$.  An extension of this algorithm is also presented which can achieve higher order convergence [$O(\eps^2)$] when the matrix derivative is known.
  \end{abstract}

\section{Introduction}
\label{intro-section}
In this paper, we present algorithms to find approximate Jordan vectors of nearly defective matrices, designed to be scalable to large sparse/ structured matrices (e.g., arising from discretized partial differential equations),  by requiring only a small number of linear systems to be solved. We are motivated by a recent explosion of interest in the physics literature in ``exceptional points'' (EPs), in which a linear operator $A(p)$ that depends on some parameter(s) $p$ becomes  defective  at $p = p_0$, almost always with a $2\times2$ Jordan block~\cite{lin2011unidirectional,peng2014parity,feng2013experimental,doppler2016dynamically,xu2016topological,brandstetter2014reversing,zhen2015}.  A variety of interesting phenomena arise in the vicinity of the EP.  The limiting behavior at the EP~\cite{mailybaev2005geometric,Pick2016}, as well as perturbation theory around it~\cite{Seyranian2003}, is understood in terms of the Jordan chain relating an eigenvector $x_0$ and a Jordan vector $j_0$:
\begin{align}
A_0 x_0 &= \lambda_0 x_0, \\
A_0 j_0 &= \lambda_0 j_0 + x_0,
\end{align}
where $A_0 = A(p_0)$.
Typically in   problems that arise from  modeling  realistic physical  systems, one does not know $A_0$ precisely, but only  has  a good approximation $\mu$ for the degenerate eigenvalue $\lambda_0$ and 
 a matrix $A_\eps = A(p_0 + \eps)$ \emph{near} to $A_0$, limited both by numerical errors and the difficulty of tuning $p$ to find the EP exactly.  Given $A_\eps$ and $\mu$, the challenge is to approximately compute $x_0$, $\lambda_0$, and $j_0$ to at least $O(\eps)$ accuracy.

Existing algorithms to approximate the Jordan chain~\cite{leung1999null,mailybaev2006} rely on computing the dense SVD of $A_\eps$, which is not scalable, or have other problems described below.  Instead, we want an algorithm that only requires a scalable method to solve linear systems $(A_\eps - \mu \mathbb{I}) x = y$ (``linear solves''), and such algorithms are readily  available (e.g. they are needed anyway to find the eigenvalues away from the EP that are being forced to coincide).  Using only such linear solves, we show that we can compute the Jordan chain to $\mathcal{O}(\eps)$ accuracy (Sec.~\ref{chain-section}), or even $\mathcal{O}(\eps^2)$ accuracy if $dA/dp$ is also supplied (Sec.~\ref{adjoint-section}), which we find is similar to the previous dense algorithms (whose accuracy had not been analyzed)~\cite{mailybaev2006}.   Our algorithm consists of two key steps.  First, in Sec.~\ref{subspace-section}, we compute an orthonormal basis for the span of the two nearly defective eigenvectors of $A_\eps$, using a shift-and-invert--like algorithm with some iterative refinement to circumvent conditioning problems for the second basis vector.  Second, in Sec.~\ref{chain-section}, we use this basis to project to a $2\times2$ matrix, find a defective matrix within $\mathcal{O}(\eps)$ of that, and use this nearby defective matrix to approximate the Jordan chain of $A_0$ to $\mathcal{O}(\eps)$. We have successfully applied this method to study EPs of large sparse matrices arising from discretizing the Maxwell equations of electromagnetism~\cite{joannopoulos2011photonic}, and include an example calculation in Sec.~\ref{numerical-section}.

In this sort of problem, we are \emph{given} the existence of a $2\times2$ Jordan block of an unknown but \emph{nearby} matrix $A_0$, and we want to find the Jordan chain. (Larger Jordan blocks, which rarely arise in practice for large matrices~\cite{Lin2016}, are discussed in Sec.~\ref{conclusions-section}.)  To uniquely specify the Jordan vector $j_0$ (to which we can add any multiple of $x_0$~\cite{mailybaev2006}), we adopt the normalization choice $\|x_0\|=1$ and $x_0^*j_0 = 0$. 
Our algorithm finds approximations with \emph{relative}   errors $\|\lambda_\eps - \lambda_0\|/\|\lambda_0\|$, $\|x_\eps - x_0\|/\|x_0\|$, and  $\|j_\eps - j_0\|/\|j_0\|$ which are  $\mathcal{O}(\eps)$. 
The nearly degenerate eigenvalues and eigenvectors of $A_\eps$ are given via perturbation theory~\cite{Moiseyev2011} by Puiseux series:
\begin{equation}
x^\pm = x_0 \pm c \eps^{1/2} j_0 + \eps w \pm \mathcal{O}(\eps^{3/2})
\label{eq:Puiseux}
\end{equation}
for some constant $c$ and a vector $w$, and similarly for the eigenvalues $\lambda^\pm$.  This means that a naive eigenvalue algorithm to find $x_\eps$ by simply computing an eigenvector of $A_\eps$ will only attain $\mathcal{O}(\eps^{1/2})$ accuracy, and furthermore that computing both $x^\pm$ eigenvectors is numerically problematic for small $\eps$ because they are nearly parallel.   In contrast, the invariant subspace \emph{spanned} by $x^\pm$ varies smoothly around $A_0$ [as can easily be seen by considering $x^+ + x^-$ and $\eps^{-1/2} (x^+ - x^-)$] and, with some care, an orthonormal basis for this subspace can be computed accurately by a variant of shifted inverse iteration (Sec.~\ref{subspace-section}). From that invariant subspace, we can then compute $x_\eps$ and so on to $\mathcal{O}(\eps)$ accuracy, which is optimal: because the set of defective matrices forms a continuous manifold, there are infinitely many defective matrices within $\mathcal{O}(\eps)$ of $A_\eps$, and hence we cannot determine $A_0$ to better accuracy without some additional information about the desired $A_0$, such as $dA/dp$.

An algorithm to solve this problem was proposed by Mailybaev~\cite{mailybaev2006}, which uses the SVD to approximate the perturbation required to shift $A_\eps$ onto the set of defective matrices, but the dense SVD is obviously impractical for large discretized PDEs and similar matrices.    Given $A_0$, Leung and Chang~\cite{leung1999null} suggest computing $x_0$ by either an SVD or a shifted inverse iteration, and then show that $j_0$ can be computed by solving an additional linear system (which is sparse for sparse $A$).  Those authors did not analyze the accuracy of their algorithm if it is applied to $A_\eps$ rather than $A_0$, but we can see from above that a naive eigensolver only computes $x_0$  to $\mathcal{O}(\eps^{1/2})$ accuracy, in which case they obtain similar accuracy for $j_0$ (since the  linear system they solve to find $j_0$ depends on $x_0$).   If an $\mathcal{O}(\eps)$ algorithm is employed to compute $x_0$ accurately (e.g. via our algorithm below), then Leung and Chang's algorithm computes $j_0$ to $\mathcal{O}(\eps)$ accuracy as well, but requires an additional linear solve compared to our algorithm.  It is important to note that our problem is very different from Wilkinson's problem~\cite{golub1976ill}, because in our case the Jordan structure (at least for the eigenvalue of interest) is known, making it possible to devise more efficient algorithms than for computing an unknown Jordan structure.  (The latter also typically involve dense SVDs, Schur factorizations, or similar~\cite{edelman1997geometric,lippert1999computation,kaagstrom1980algorithm}.)  Our algorithm differs from these previous works in that it relies primarily on inverse
iteration (and explicitly addresses the accuracy issues thereof), and thus is suitable for use with the large (typically sparse) matrices arising in 
physical applications.  Moreover we perform an analysis of the error in terms
of $\eps$ (i.e., the distance between $A_\eps$ and $A_0$), which was absent from previous works.

\section{Finding the invariant subspace}
\label{subspace-section}
In this section, we describe  two algorithms for computing the invariant subspace  of $A_\eps$ spanned by the eigenvectors whose eigenvalues are near  $\lambda_0$.   
Both algorithms begin by using   standard methods (e.g.,  Arnoldi or shifted inverse iteration~\cite{trefethen1997numerical}) to  find $u_1$ as an eigenvector of $A_\eps$. 
Then, we  find an additional  basis vector $u_2$ as  an  eigenvector of  $P^\perp A_\eps$, where $P^\perp\equiv\mathbb{I}-u_1u_1^*$ is the orthogonal projection onto the subspace perpendicular to $u_1$.  We denote   by  $U_\eps$ the matrix  whose columns  are the orthonormal basis vectors, $u_1$ and $u_2$.   
In Algorithm~\ref{subspace-dense-alg}, $u_2$ is found by performing inverse iteration  on the operator $P^\perp A_\eps$ (lines 3--8).  Since $P^\perp A_\eps$  does not preserve the sparsity pattern of  $A_\eps$, this algorithm is scalable only when using  matrix-free iterative solvers (that is, solvers which only require fast matrix-vector products).

\begin{algorithm}
  \caption{for finding the invariant subspace $U_\eps$ using matrix-free iterative solvers}
  \begin{algorithmic}[1]
    \State Find an eigenvector $u_1$ of $A_\eps$ with eigenvalue near $\mu$  (e.g., using  inverse iteration).  
    \State $P^\perp \gets \mathbb{I}-u_1u_1^*$
    \State $\lambda' \gets \mu$
    \State $u_2 \gets $ a random vector orthogonal to $u_1$.
    \While{$\|(P^\perp A_\eps  - \lambda'\mathbb{I}) u_2\| > \text{tol}$}
      \State $v \gets  (P^\perp A_\eps-\lambda'\mathbb{I})^{-1} u_2$
      \State $u_2 \gets v / \|v\|$
      \State $\lambda' \gets u_2^*P^\perp A_\eps u_2$
    \EndWhile
    \State Set $U_\eps = (u_1, u_2)$.
     \end{algorithmic}
 \label{subspace-dense-alg}
\end{algorithm}
Note that this iteration could easily be changed from Rayleigh-quotient inverse iteration to an Arnoldi Krylov-subspace procedure. In practical applications where the matrix parameters were already tuned to force a near EP, however, the estimate $\mu$ is close enough to the desired eigenvalue $\lambda_2$ that convergence only takes a few steps of Algorithm~\ref{subspace-dense-alg}.

Alternatively, when using sparse-direct solvers, one can implement  Algorithm~\ref{subspace-sparse-alg}, which performs inverse iteration on $A_\eps$ and only then applies $P^\perp$.
In order to see the equivalence of the two algorithms, let us  denote the nearly degenerate  eigenvectors of $A_\eps$ by $u_1$ and $x_2$ [i.e., $A_\eps u_1 = \lambda_1u_1$ and $A_\eps x_2 = \lambda_2 x_2$, with $\lambda_1\approx\lambda_2$]. 
While Algorithm~\ref{subspace-dense-alg} finds the eigenvector  $u_1$  and then computes  an    eigenvector of $P^\perp A_\eps$,  �Algorithm~~\ref{subspace-sparse-alg} computes  in the second step the orthogonal  projection  of the second eigenvector of $A_\eps$, $P^\perp x_2$. 
The equivalence of the Algorithms follows from the fact that   the orthogonal projection of $x_2$ is precisely  an eigenvector of $P^\perp A_\eps$ [since  
$(P^\perp A_\eps)(P^\perp x_2) = (P^\perp A_\eps)(\mathbb{I}-u_1u_1^*) x_2 = 
P^\perp (A_\eps x_2) - P^\perp (A_\eps u_1) u_1^*x_2 = \lambda_2 P^\perp x_2 
-\lambda_1 P^\perp u_1 u_1^*x_2=  \lambda_2 (P^\perp x_2)$].
Note, however, that  a subtlety arises when using Algorithm~\ref{subspace-sparse-alg} since  
the vector $(A_\eps-\mu\mathbb{I})^{-1}u_2$  (line 3) is nearly  parallel to $u_1$ and, consequently, the roundoff error in the projection $P^\perp$ is significant. To overcome this difficulty, we use an iterative refinement procedure~\cite{wilkinson1994rounding} (lines 5--7)  before  continuing the  inverse iteration.

\begin{algorithm}
  \caption{for finding the invariant subspace $U_\eps$ using sparse-direct solvers}
  \begin{algorithmic}[1]
    \State Do steps 1-4 in Algorithm 1.
    \While{$\|(P^\perp A_\eps  - \lambda'\mathbb{I} ) u_2\| > \text{tol}$}
      \State $v \gets P^\perp (A_\eps-\lambda'\mathbb{I})^{-1} u_2$
      \State $e \gets u_2 - P^\perp (A_\eps-\lambda'\mathbb{I}) v$
      \While{$\|e\| > \text{tol}$}
        \State $v \gets v + P^\perp (A_\eps-\lambda'\mathbb{I})^{-1} e$
        \State $e \gets u_2 - P^\perp (A_\eps-\lambda'\mathbb{I}) v$
      \EndWhile
      \State $u_2 \gets v / \|v\|$
      \State $\lambda' \gets u_2^*P^\perp A_\eps u_2$
    \EndWhile
    \State $U_\eps \gets (u_1, u_2)$.
     \end{algorithmic}
\label{subspace-sparse-alg}
\end{algorithm}

The importance of the invariant subspace $U_\eps$ is that it is, in fact, quite close
to the invariant subspace for the defective matrix $A_0$.  Thus it is possible
to find good approximants of the eigenvector and Jordan vector of $A_0$ within
the column space of $U_\eps$, as stated in the following lemma.

\begin{lemma}
Let $A_\eps$ be near to a  defective matrix $A_0$ (i.e., $\|A_\eps-A_0\| = \mathcal{O}(\eps)$), and $U_\eps$ be the invariant subspace of its nearly degenerate eigenvalue. Then 
  there exists a matrix $U_0$ which spans the invariant subspace for $A_0$
  such that $\|U_0-U_\eps\| = \mathcal{O}(\eps)$.
  \label{basis-close-lem}
\end{lemma}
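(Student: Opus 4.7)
The plan is to use spectral projector (Riesz projector) perturbation theory. Since $A_0$ has a double eigenvalue $\lambda_0$ isolated from the rest of its spectrum by some gap $\delta > 0$, one can pick a fixed contour $\Gamma$ (say a small circle of radius $\delta/2$ around $\lambda_0$) that encloses $\lambda_0$ and no other eigenvalue of $A_0$. For $\eps$ small enough, the two nearly degenerate eigenvalues of $A_\eps$ both lie inside $\Gamma$ and the rest of the spectrum of $A_\eps$ lies outside. Define the spectral projectors
\begin{equation*}
P_0 := \frac{1}{2\pi i}\oint_\Gamma (zI - A_0)^{-1}\,dz, \qquad P_\eps := \frac{1}{2\pi i}\oint_\Gamma (zI - A_\eps)^{-1}\,dz.
\end{equation*}
Then $P_0$ is the projector onto the two-dimensional invariant subspace of $A_0$ corresponding to $\lambda_0$, and $P_\eps$ is the projector onto the span of the columns of $U_\eps$.

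The first step is to estimate $\|P_\eps - P_0\|$ using the second resolvent identity
\begin{equation*}
(zI - A_\eps)^{-1} - (zI - A_0)^{-1} = (zI - A_\eps)^{-1}(A_\eps - A_0)(zI - A_0)^{-1}.
\end{equation*}
Since $\Gamma$ stays at distance at least $\delta/2$ from the spectrum of $A_0$, the resolvent $(zI - A_0)^{-1}$ is uniformly bounded on $\Gamma$; by a standard continuity argument the same holds for $(zI - A_\eps)^{-1}$ once $\eps$ is small. Integrating the above identity over $\Gamma$ and using $\|A_\eps - A_0\| = \mathcal{O}(\eps)$ gives $\|P_\eps - P_0\| = \mathcal{O}(\eps)$.

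The second step is to construct $U_0$. Define $U_0 := P_0 U_\eps$. Since $P_\eps U_\eps = U_\eps$ (the columns of $U_\eps$ span the range of $P_\eps$), we have
\begin{equation*}
U_0 - U_\eps = (P_0 - P_\eps) U_\eps,
\end{equation*}
so $\|U_0 - U_\eps\| \le \|P_0 - P_\eps\| \cdot \|U_\eps\| = \mathcal{O}(\eps)$, as desired. It remains to check that $U_0$ actually spans the invariant subspace of $A_0$: since $P_0$ and $P_\eps$ are both rank-two projectors with $\|P_0 - P_\eps\| = \mathcal{O}(\eps)$, the restriction of $P_0$ to the range of $P_\eps$ is a bijection onto the range of $P_0$ for $\eps$ small (an invertible $2\times 2$ change-of-basis argument), so the columns of $U_0 = P_0 U_\eps$ are linearly independent and span the invariant subspace of $A_0$.

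The main obstacle is purely formal and lies in the first step: one must ensure that the contour $\Gamma$ can be chosen independently of $\eps$ and that the resolvents remain uniformly bounded on $\Gamma$ despite $A_0$ being defective. The defectiveness of $A_0$ is not itself a problem because the resolvent of any matrix is bounded on any compact set avoiding its eigenvalues; what is needed is only the spectral gap between $\lambda_0$ and the other eigenvalues of $A_0$, which is guaranteed since $A_0$ has only finitely many eigenvalues. Note that the constructed $U_0$ need not have orthonormal columns, but the lemma does not require this; if desired, a QR factorization of $U_0$ produces an orthonormal basis within an additional $\mathcal{O}(\eps)$ of $U_\eps$.
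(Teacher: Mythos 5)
Your proof is correct, but it takes a genuinely different route from the paper. The paper derives Lemma~\ref{basis-close-lem} from Proposition~\ref{subspace-close-propn}, whose proof expands the two nearly degenerate eigenvectors $x^\pm$ of $A_\eps$ in Puiseux series and exhibits explicit linear combinations $a x^+ + b x^-$ (with coefficients of size $\eps^{-1/2}$ arranged so that the $\eps^{1/2}$ terms cancel) approximating any $\alpha x_0 + \beta j_0$; the passage from that one-directional, vector-by-vector statement to the matrix statement of the lemma is then dispatched by a dimension-counting remark. You instead work with the Riesz spectral projectors $P_0$ and $P_\eps$, bound $\|P_\eps - P_0\| = \mathcal{O}(\eps)$ via the second resolvent identity on a fixed contour, and set $U_0 = P_0 U_\eps$. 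What your approach buys: it sidesteps the Puiseux expansion entirely (so no square-root branching or near-parallel eigenvectors ever enter), it produces $U_0$ explicitly rather than by an existence argument, and it handles both directions of approximation at once, making rigorous the ``follows as a corollary'' step that the paper leaves implicit; it is essentially the Kato Chapter~2 argument that the paper cites but does not carry out. What the paper's approach buys: it is self-contained given the perturbation-theoretic setup already introduced for the algorithm's error analysis, and the explicit coefficients $a, b$ make visible exactly how the $\mathcal{O}(\eps^{1/2})$ eigenvector errors cancel inside the invariant subspace, which is the mechanism the rest of the paper exploits. Your closing remarks correctly handle the two small technical points (uniform resolvent bounds on $\Gamma$ despite the defectiveness of $A_0$, and the linear independence of the columns of $P_0 U_\eps$), so there is no gap.
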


This Lemma establishes that the invariant subspace of nearly degenerate eigenvalues varies smoothly at the vicinity of the EP, a property that has been previously established (e.g.,~\cite[Chapter~2.1.4]{kato2013perturbation}) and can be easily proven using the following proposition:
\begin{propn}
  Let $v_0$ be any vector in the invariant subspace of $A_0$. 
  Then there exists a vector $v$ in the column space of $U_\eps$ such that 
  $\|v-v_0\| = \mathcal{O}(\eps\|v_0\|)$.
\label{subspace-close-propn}
\end{propn}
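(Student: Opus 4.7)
The plan is to exhibit the desired $v$ as the image of $v_0$ under the spectral projector of $A_\eps$ onto the nearly degenerate invariant subspace, and to control the difference between this projector and the corresponding spectral projector of $A_0$ via a resolvent contour integral. Concretely, let $\gamma$ be a positively oriented circle in $\Complex$, centered at $\lambda_0$, with radius $r>0$ chosen small enough that $\gamma$ encloses no other eigenvalues of $A_0$ but large enough to be bounded away from $\lambda_0$. For all sufficiently small $\eps$, the eigenvalues $\lambda^\pm$ of $A_\eps$ predicted by the Puiseux expansion in \eqref{Puiseux} lie inside $\gamma$, and no other eigenvalue of $A_\eps$ does. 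Define the spectral projectors
\begin{equation*}
P_0 \;=\; \frac{1}{2\pi i}\oint_\gamma (zI - A_0)^{-1}\,dz,
\qquad
P_\eps \;=\; \frac{1}{2\pi i}\oint_\gamma (zI - A_\eps)^{-1}\,dz.
\end{equation*}
Then $P_0$ projects onto the two-dimensional invariant subspace of $A_0$, while $P_\eps$ projects onto the span of the columns of $U_\eps$.

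Given $v_0$ in the invariant subspace of $A_0$, set $v \;=\; P_\eps v_0$. Since $v$ lies in the range of $P_\eps$, it lies in the column space of $U_\eps$, as required. Using $P_0 v_0 = v_0$, we have
\begin{equation*}
v - v_0 \;=\; (P_\eps - P_0)\,v_0,
\end{equation*}
and the resolvent identity $(zI - A_\eps)^{-1} - (zI - A_0)^{-1} = (zI - A_\eps)^{-1}(A_\eps - A_0)(zI - A_0)^{-1}$ gives
\begin{equation*}
P_\eps - P_0 \;=\; \frac{1}{2\pi i}\oint_\gamma (zI - A_\eps)^{-1}(A_\eps - A_0)(zI - A_0)^{-1}\,dz.
\end{equation*}
Taking norms and using $\|A_\eps - A_0\|=\mathcal{O}(\eps)$ together with uniform bounds on the two resolvents along $\gamma$ will then yield $\|P_\eps - P_0\| = \mathcal{O}(\eps)$, hence $\|v-v_0\| \le \|P_\eps - P_0\|\,\|v_0\| = \mathcal{O}(\eps\|v_0\|)$.

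The main obstacle is justifying the uniform resolvent bound. For $A_0$, the fact that $\lambda_0$ is defective is harmless because $\gamma$ avoids a full neighborhood of the spectrum of $A_0$, so $M_0 := \sup_{z\in\gamma}\|(zI-A_0)^{-1}\|$ is finite and independent of $\eps$. For $A_\eps$, I would write
\begin{equation*}
(zI - A_\eps)^{-1} \;=\; (zI - A_0)^{-1}\bigl[I - (A_\eps - A_0)(zI - A_0)^{-1}\bigr]^{-1}
\end{equation*}
and observe that, for $\eps$ small enough that $\|A_\eps - A_0\|\,M_0 \le 1/2$, a Neumann-series argument bounds the second factor by $2$ uniformly on $\gamma$. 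This gives $\sup_{z\in\gamma}\|(zI-A_\eps)^{-1}\| \le 2M_0$, and the length of $\gamma$ is the fixed constant $2\pi r$, so the integral bound goes through. Choosing $r$ independent of $\eps$ (for instance, half the distance from $\lambda_0$ to the nearest other eigenvalue of $A_0$) is what makes this uniformity possible, and is the key technical point; everything else is a routine application of contour-integral perturbation theory for spectral projectors.
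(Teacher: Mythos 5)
Your proof is correct, but it follows a genuinely different route from the paper's. The paper works directly with the eigenvectors of $A_\eps$: it expands $v_0 = \alpha x_0 + \beta j_0$, invokes the Puiseux series \eqref{Puiseux} for $x^\pm$, and exhibits $v = a x^+ + b x^-$ with coefficients $a,b$ that individually blow up like $\eps^{-1/2}$ but whose combination reproduces $v_0$ up to the $\eps \alpha w$ term. You instead take $v = P_\eps v_0$ where $P_\eps$ is the Riesz spectral projector over a fixed contour $\gamma$, and bound $\|P_\eps - P_0\| = \mathcal{O}(\eps)$ via the resolvent identity plus a Neumann-series bound on $(zI-A_\eps)^{-1}$. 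Your argument is more self-contained and arguably more robust: it does not presuppose the validity of the Puiseux expansion (which the paper imports from perturbation theory for a one-parameter analytic family, whereas the hypothesis here is only $\|A_\eps - A_0\| = \mathcal{O}(\eps)$), it avoids the near-parallelism of $x^\pm$ and the singular coefficients entirely, and it generalizes immediately to larger Jordan blocks. It is essentially the Kato-style argument the paper alludes to when citing \cite{kato2013perturbation} for the smoothness of the invariant subspace. What the paper's computation buys in exchange is an explicit identification of the error term ($\eps\alpha w$ plus higher order) in the same Puiseux coordinates used throughout the rest of the analysis. One small point worth making explicit in your write-up: the range of $P_\eps$ coincides with the column space of $U_\eps$ because $u_1$ and $u_2 \propto P^\perp x_2$ span exactly the invariant subspace of the two eigenvalues enclosed by $\gamma$; this is immediate from the construction in Algorithms~\ref{subspace-dense-alg}--\ref{subspace-sparse-alg} but deserves a sentence.
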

\begin{proof}
  Expand $v_0$ in the basis $\{x_0,j_0\}$ consisting of the eigenvector
  and Jordan vector for $A_0$, so that 
  \[
    v_0 = \alpha x_0 + \beta j_0.
  \]
  According to perturbation theory near $A_0$, the eigenvectors 
  $x^{\pm}$ of $A_\eps$ can be expanded in a Puiseux series [\eqref{Puiseux}]. 
Let us choose $v = ax^+ + bx^-$ with $a=(\alpha+\beta\eps^{-1/2}/c)/2$
 and $b=(\alpha-\beta\eps^{-1/2}/c)/2$.  By construction, $v$ is in the 
 invariant subspace of $A_\eps$ and, therefore, also in the column space
  of $U_\eps$.  Moreover,
  \[
    \|v_0 - v\| = \eps \alpha w + \mathcal{O}[(|\alpha|+|\beta|\eps^{-1/2})\eps^{3/2}] 
    = \mathcal{O}(\eps\|v_0\|).
  \]
\end{proof}

Proposition~\ref{subspace-close-propn} allows us to approximate vectors in
$U_0$ (the invariant subspace of $A_0$) with vectors in $U_\eps$ (the invariant 
subspace in $A_\eps$).  To prove Lemma~\ref{basis-close-lem}, we need to show that the 
converse is also true, i.e., that  vectors in $U_0$ can be approximated by vectors in $U_\eps$.  Since $U_0$ and $U_\eps$  have equal dimensions, this follows as a corollary.

\section{Computing the Jordan chain}
\label{chain-section}
In this section we introduce vectors, $x_\eps$ and  $j_\eps$, in the column space of  $U_\eps$, which approximate the Jordan chain vectors, $x_0$ and $j_0$,
with $\mathcal{O}(\eps)$ accuracy.
Since  the first and second columns of  $U_\eps$ are  eigenvectors of $A_\eps$ and  $P^\perp A_\eps$ respectively,
a naive guess would be  to set $x_\eps = u_1$ and  $j_\eps\propto u_2$.  However, we know from perturbation theory that such an approach leads to relatively large errors since 
$\|u_1-x_0\| = O(\eps^{1/2})$, and we show below that we can obtain a more accurate approximation.
Algorithm~\ref{xj-alg} presents a prescription for constructing  $x_\eps$ and  $j_\eps$.

\begin{algorithm}
  \caption{for finding approximate Jordan chain vectors.}
  \label{xj-alg}
  \begin{algorithmic}[1]
  \State Introduce $S_\eps=U_\eps^*A_\eps U_\eps$ (denote entries of $S_\eps$  by $s_{ij}$).
    \State Set $\lambda_\eps = (s_{22}+s_{11})/2$.
    \State Set $\gamma = (s_{22}-s_{11})/2s_{12}$.
    \State Set $x_\eps = u_1 + \gamma u_2$ and $j_\eps = u_2/s_{12}$.
    \State Normalize $x_\eps$ and $j_\eps$.
  \end{algorithmic}
\end{algorithm}

In the remainder  of this section, we prove the following theorem:
\begin{thm}
Suppose  $A_0$ is  a defective matrix with  Jordan chain vectors $x_0$ and $j_0$,  
and  $A_\eps$ is  a nearby matrix such that $\|A_\eps - A_0\| = \mathcal{O}(\eps)$. 
The  vectors $x_\eps$ and $j_\eps$,   constructed via  Algorithms~\ref{subspace-dense-alg}--\ref{xj-alg},  satisfy
  $\|x_0-x_\eps\|, \|j_0-j_\eps\| = \mathcal{O}(\eps)$.
  \label{thm1}  
\end{thm}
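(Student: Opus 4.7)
The plan is to reduce Theorem~\ref{thm1} to a clean $2\times 2$ perturbation analysis. Invoke Lemma~\ref{basis-close-lem} to fix an orthonormal basis $U_0$ of the invariant subspace of $A_0$ with $\|U_0 - U_\eps\| = \mathcal{O}(\eps)$, and set $S_0 = U_0^* A_0 U_0$. The telescoping identity
\[
  S_\eps - S_0 = (U_\eps - U_0)^* A_\eps U_\eps + U_0^*(A_\eps - A_0) U_\eps + U_0^* A_0 (U_\eps - U_0)
\]
then shows, term by term, that $S_\eps = S_0 + \mathcal{O}(\eps)$ entrywise.

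Next I would analyze Algorithm~\ref{xj-alg} applied to the exact matrix $S_0$. Since $U_0$ spans the defective invariant subspace of $A_0$, the $2\times 2$ matrix $S_0$ inherits the double eigenvalue $\lambda_0$, and with $a = U_0^* x_0$, $b = U_0^* j_0$ we have $S_0 a = \lambda_0 a$ and $S_0 b = \lambda_0 b + a$ (with $\|a\|=1$, $a^*b=0$). Trace invariance forces $s^0_{11}+s^0_{22}=2\lambda_0$, so line~2 recovers $\lambda_0$ exactly; solving $(S_0-\lambda_0 I)v=0$ gives $v\propto(1,\gamma_0)^T$ with $\gamma_0=(s^0_{22}-s^0_{11})/(2s^0_{12})$, which matches line~3 and identifies the eigenvector direction $a$. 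A direct computation shows
\[
  (S_0-\lambda_0 I)\begin{pmatrix} 0 \\ 1/s^0_{12} \end{pmatrix} = \begin{pmatrix} 1 \\ \gamma_0 \end{pmatrix},
\]
so $(0,1/s^0_{12})^T$ is a Jordan vector of $S_0$ and therefore equals $b$ up to an additive multiple of $a$; the additive piece is killed by the orthogonality $x_0^*j_0=0$ enforced in the ``normalize'' step. Thus the algorithm applied to $S_0$ reproduces $x_0$ and $j_0$ exactly.

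Promoting this to $S_\eps$ is then a continuity argument. Each formula in Algorithm~\ref{xj-alg} is a rational function of the entries $s_{ij}$ with $s_{12}$ appearing only in denominators; provided $s^0_{12}$ is bounded away from zero, the $\mathcal{O}(\eps)$ perturbation of the $s_{ij}$ yields $\lambda_\eps=\lambda_0+\mathcal{O}(\eps)$ and $\gamma=\gamma_0+\mathcal{O}(\eps)$. Consequently $x_\eps=U_\eps(1,\gamma)^T=U_0(1,\gamma_0)^T+\mathcal{O}(\eps)\propto x_0+\mathcal{O}(\eps)$, and after unit normalization $\|x_\eps-x_0\|=\mathcal{O}(\eps)$; the parallel computation for $j_\eps=u_2/s_{12}$, followed by orthogonalization against $x_\eps$, yields $\|j_\eps-j_0\|=\mathcal{O}(\eps)$.

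The most delicate part is the Jordan-vector half of the argument: the raw vector $u_2/s_{12}$ is a Jordan vector only modulo the eigenvector direction, so the convention $x_0^*j_0=0$ is essential and must be read into ``normalize'' in line~5. A secondary concern is the non-degeneracy $s^0_{12}\neq 0$ needed to make the rational-function formulas smooth; this can be verified from the Puiseux expansion \eqref{Puiseux}, which forces $u_2^0$ to lie (to leading order) in the direction of $j_0$, giving $s^0_{12}\approx 1/\|j_0\|=\Theta(1)$ under the mild assumption that $\|j_0\|$ is bounded.
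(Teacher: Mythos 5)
Your argument is correct, but it takes a genuinely different route from the paper's. The paper proves Theorem~\ref{thm1} by introducing the auxiliary defective matrix $\tilde{S}$ of \eqref{tildeS-defn} (whose exact Jordan chain is what Algorithm~\ref{xj-alg} computes), showing $\|\tilde{S}-S_0\|=\mathcal{O}(\eps)$ via $\|\tilde S - S_\eps\| = \mathcal{O}(\eps)$ (the lower-left entry being $\mathcal{O}(\eps)$ because $s_{22}-s_{11}=\mathcal{O}(\eps^{1/2})$) together with your same telescoping bound $\|S_\eps - S_0\| = \mathcal{O}(\eps)$, and then invoking the parametrization \eqref{GeneralDefect} of $2\times2$ defective matrices to prove that nearby defective matrices have nearby Jordan chains (Lemma~\ref{lemma2}); the transfer back to $A_0$ via the triangle inequality (Lemma~\ref{lemma3}) is identical to your final step. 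You bypass $\tilde S$ and the $(\alpha,\beta,\lambda)$ parametrization entirely by observing that the algorithm's formulas, as rational functions of the entries $s_{ij}$, return the exact Jordan chain when fed a defective matrix's entries and are Lipschitz near $S_0$ provided $s^0_{12}=\Theta(1)$. This is more elementary and avoids the paper's square-root extraction step ($|\tilde\alpha^2-\alpha_0^2|=\mathcal{O}(\eps)\Rightarrow|\tilde\alpha-\alpha_0|=\mathcal{O}(\eps)$ after a sign choice), at the cost of losing the conceptual byproduct that $x_\eps,j_\eps$ form the \emph{exact} chain of an explicitly constructed defective matrix within $\mathcal{O}(\eps)$ of $A_\eps$, which is how the paper frames the method in the introduction. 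Note that both routes secretly need the same non-degeneracy condition you isolate ($s_{12}$ bounded away from zero; in the paper it is needed both to pass from $\tilde\alpha^2$ to $\tilde\alpha$ and to keep the denominators in \eqref{jordan-chain-eq} bounded below), and both are equally vague about whether ``normalize'' in line~5 includes the orthogonalization $x_\eps^*j_\eps=0$ needed to pin down the Jordan vector modulo the eigenvector; your explicit flagging of these two points is a genuine improvement in rigor over the paper's own exposition. One small caution: your claim $s^0_{12}\approx 1/\|j_0\|$ holds only to leading order in $\eps^{1/2}$ (since the first column of $U_0$ agrees with $x_0$ only to $\mathcal{O}(\eps^{1/2})$), but that suffices to bound $s^0_{12}$ away from zero for small $\eps$, which is all you use.
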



To  prove Theorem~\ref{thm1}, we first introduce  the  $2\times2$ defective matrix
\begin{equation}
 \tilde{S} = \left(\begin{array}{cc}
    s_{11} & s_{12} \\ 
    -\frac{(s_{11}-s_{22})^2}{4s_{12}} & s_{22} 
  \end{array} \right).
  \label{eq:tildeS-defn}
\end{equation}
The vectors $x_\eps$ and $j_\eps$, constructed above, are related to the Jordan chain vectors $x_\varepsilon'$ and $j_\varepsilon'$ of $\tilde{S}$ via  $x_\eps = U_\eps x'_\eps$ and $j_\eps = U_\eps j'_\eps$. [More explicitly, one can verify that 
the   vectors $x'_\eps = [1 ; \gamma]$ and $j'_\eps = [0 ; s_{12}^{-1}]$ satisfy the chain relations:
$\tilde{S}x'_\eps = \lambda_\eps x'_\eps$ and $\tilde{S}j'_\eps = \lambda_\eps j'_\eps + x'_\eps$.]
Using this observation, we prove Theorem~\ref{thm1}  in two steps: First, we    introduce the $2\times2$ defective matrix $S_0 \equiv  U_0^*A_0U_0$, and  show that  the    Jordan chain vectors  of $S_0$ and $\tilde{S}$ are within $\mathcal{O}(\eps)$ of each other (Lemma~\ref{lemma2}). 
Then, we use Lemma~\ref{lemma3} to show  that the proximity of  the  Jordan chains of $S_0$ and $\tilde{S}$ implies the proximity of $\{x_\eps, j_\eps\}$ and the Jordan chain vectors of $A_0$. In the remainder of this section, we prove Lemmas~\ref{lemma2}--\ref{lemma3}.

\begin{lemma}
\label{lemma2}
The  Jordan chains  of $S_0$ and $\tilde{S}$ are  within $\mathcal{O}(\eps)$ of  each other.  
\end{lemma}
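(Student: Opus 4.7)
The plan is to reduce the lemma to a perturbation statement about $2\times 2$ matrices. First I would use Lemma~\ref{basis-close-lem} together with $\|A_\eps-A_0\|=\mathcal{O}(\eps)$ to conclude, by expanding $U_0^*A_0U_0 - U_\eps^*A_\eps U_\eps$ and bounding each factor, that $\|S_\eps - S_0\|=\mathcal{O}(\eps)$. In particular the diagonal entries and the (1,2) entry of $\tilde S$ (which agree with those of $S_\eps$) are within $\mathcal{O}(\eps)$ of the corresponding entries of $S_0$.

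Next I would exploit the structural identity that $S_0$ is itself a $2\times 2$ defective matrix, so its entries satisfy $(s_0)_{21} = -((s_0)_{11}-(s_0)_{22})^2/[4(s_0)_{12}]$—exactly the formula used to build $\tilde S$ from $S_\eps$. Since $u_1$ is an eigenvector of $A_\eps$ orthogonal to $u_2$, the entry $(s_\eps)_{21}$ vanishes, and hence $(s_0)_{21}=\mathcal{O}(\eps)$. Combined with a nondegeneracy statement $|(s_0)_{12}|\gtrsim 1$ (which follows from $S_0$ being defective with $(s_0)_{21}$ small, so it cannot be diagonal), this forces $(s_0)_{11}-(s_0)_{22}=\mathcal{O}(\eps^{1/2})$ and likewise $s_{11}-s_{22}=\mathcal{O}(\eps^{1/2})$. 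Comparing the two instances of the formula $-(a-d)^2/(4b)$ evaluated at the two nearby triples $(s_{11},s_{22},s_{12})$ and $((s_0)_{11},(s_0)_{22},(s_0)_{12})$, with the squared differences giving a cross-term of size $\mathcal{O}(\eps^{1/2})\cdot\mathcal{O}(\eps)=\mathcal{O}(\eps^{3/2})$, I would conclude $\|\tilde S - S_0\|=\mathcal{O}(\eps)$.

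Finally, I would note that the Jordan chains of $\tilde S$ and $S_0$ are given by the same explicit formulas used in Algorithm~\ref{xj-alg}: namely $x'=[1,\gamma]$ with $\gamma=(s_{22}-s_{11})/(2s_{12})$ and $j'=[0,1/s_{12}]$ (up to the fixed scale enforced by the chain relation with coefficient 1). A direct computation along the same $(a/b)-(c/d)$ lines, using $|s_{12}|,|(s_0)_{12}|\gtrsim 1$ and the $\mathcal{O}(\eps^{1/2})$ bound on numerators, yields $|\gamma-\gamma_0|=\mathcal{O}(\eps)$ and $|s_{12}^{-1}-(s_0)_{12}^{-1}|=\mathcal{O}(\eps)$, hence $\|x'_\eps - x'_0\|, \|j'_\eps - j'_0\|=\mathcal{O}(\eps)$.

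The main obstacle is the bookkeeping at step two: one has to simultaneously use that $(s_0)_{21}$ is \emph{small} (from closeness to the triangular $S_\eps$) and that $(s_0)_{12}$ is \emph{bounded away from zero} (from defectiveness of $S_0$) to extract the $\mathcal{O}(\eps^{1/2})$ decay of $s_{11}-s_{22}$, which is what prevents the $1/s_{12}$ factors in $\tilde S$ from amplifying the perturbation beyond $\mathcal{O}(\eps)$. Establishing this nondegeneracy cleanly—essentially, that the construction of $U_\eps$ orients the basis so that the off-diagonal coupling does not degenerate as $\eps\to 0$—is the only non-mechanical part of the argument.
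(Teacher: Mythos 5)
Your proposal is correct, but it reaches the conclusion by a genuinely different route than the paper. The paper's proof invokes the parametrization \eqref{GeneralDefect} of a general $2\times2$ defective matrix by $(\alpha,\beta,\lambda)$, establishes $\|\tilde S-S_0\|=\mathcal{O}(\eps)$ (getting the $\mathcal{O}(\eps^{1/2})$ splitting of $s_{11}-s_{22}$ from eigenvalue perturbation theory rather than from the matrix identity you use), then extracts $|\tilde\alpha-\alpha_0|=\mathcal{O}(\eps)$ from $|\tilde\alpha^2-\alpha_0^2|=\mathcal{O}(\eps)$ after a sign choice, bounds $|\tilde\beta-\beta_0|$ via the product $\tilde\alpha\tilde\beta$ read off the diagonal, and finally reads the chains from the explicit formulas \eqref{jordan-chain-eq} in $(\alpha,\beta)$. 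You bypass the parametrization entirely: you derive the $\mathcal{O}(\eps^{1/2})$ splitting from the discriminant identity $(s_0)_{21}=-((s_0)_{11}-(s_0)_{22})^2/[4(s_0)_{12}]$ together with $(s_\eps)_{21}=0$, get the sharper bound $\mathcal{O}(\eps^{3/2})$ on the $(2,1)$ discrepancy, and compare the chain vectors through the entrywise formulas $x'=[1,\gamma]$, $j'=[0,1/s_{12}]$ that Algorithm~\ref{xj-alg} actually computes --- which avoids the square-root extraction and sign ambiguity in $\alpha$ and ties the lemma more directly to the algorithm's output. What each approach buys: the paper's version connects to the known geometry of the defective-matrix manifold~\cite{edelman1997geometric}, while yours is more self-contained and elementary. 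Note that both arguments rest on the same nondegeneracy assumption $|s_{12}|\gtrsim 1$: the paper uses it silently (both in passing from $|\tilde\alpha^2-\alpha_0^2|=\mathcal{O}(\eps)$ to $|\tilde\alpha-\alpha_0|=\mathcal{O}(\eps)$ and in dividing by $\tilde\alpha$ to isolate $\tilde\beta-\beta_0$), and you flag it but justify it only loosely (``cannot be diagonal'' is not quantitative). The clean justification is that $S_0=U_0^*A_0U_0$ is, up to the $\mathcal{O}(\eps)$ freedom in $U_0$, a fixed matrix representing a genuine $2\times2$ Jordan block of $A_0$; since $u_1$ is within $\mathcal{O}(\eps^{1/2})$ of the eigenvector $x_0$, the nilpotent part must appear in the $(1,2)$ entry with magnitude bounded below by a constant depending only on $A_0$ (essentially $1/\|j_0\|$), uniformly as $\eps\to 0$. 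With that supplied, your argument is complete.
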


\begin{proof}
Any defective matrix is similar to a  $2\times2$ defective matrix of the form
  \begin{equation}
   S =  \left(\begin{array}{cc}
      \lambda + \alpha\beta & \alpha^2 \\
      -\beta^2 & \lambda - \alpha\beta
    \end{array}\right),
    \label{eq:GeneralDefect}
  \end{equation}
with  appropriate complex parameters $\alpha,\beta,\lambda$~\cite{edelman1997geometric}.   
The   Jordan chain vectors of $S$ are
  \begin{equation}
    x' = \frac{1}{\sqrt{|\alpha|^2+|\beta|^2}}
    \left(\begin{array}{c}\alpha\\-\beta\end{array}\right),
    \quad
    j' = \frac{1}{|\alpha|^2+|\beta|^2}
    \left(\begin{array}{c}\beta\\\alpha\end{array}\right).
    \label{eq:jordan-chain-eq}
  \end{equation}
 
 The matrix $\tilde{S}$ can be recast in the form of \eqref{GeneralDefect} with 
  $\tilde{\alpha}^2 = s_{12}$, 
  $\tilde{\beta}^2=\frac{(s_{22}-s_{11})^2}{4s_{12}}$, and 
  $\tilde{\lambda} = (s_{11}+s_{22})/2$.  
To rewrite  $S_0$ in the form of \eqref{GeneralDefect},  we introduce  $\alpha_0, \beta_0$ and $\lambda_0$. 
From  \eqref{jordan-chain-eq}, in order to prove the proximity of the Jordan chains,  it remains to show that 
 $|\tilde{\alpha}-\alpha_0|$ 
  and $|\tilde{\beta}-\beta_0|$ are both $\mathcal{O}(\eps)$. 

Since $u_1$ is an eigenvector of $A_\eps$ and is also  orthogonal to $u_2$, $S_\eps$ is upper triangular (i.e. $s_{21} = 0$). By construction, 
the matrices $\tilde{S}$ [\eqref{tildeS-defn}] and $S_\eps\equiv U_\eps^*A_\eps U_\eps$ differ only in the lower-left entry, which has the form $(s_{11}-s_{22})^2 / 4s_{12}$.  Since $s_{11}$ and $s_{22}$ are the nearly degenerate eigenvalues of $A_\eps$, they differ by $\eps^{1/2}$, so this entry is of size  $\mathcal{O}(\eps)$, which implies $\|\tilde{S}-S_\eps\|=\mathcal{O}(\eps)$.
From  Lemma~\ref{basis-close-lem} we have $\|U_\eps-U_0\|=\mathcal{O}(\eps)$, which implies that $\|S_\eps-S_0\|=\mathcal{O}(\eps)$, and it follows that $\|\tilde{S}-S_0\| = \mathcal{O}(\eps)$. Then, using  \eqref{GeneralDefect}, we conclude that $|\tilde{\alpha}^2-\alpha_0^2| = \mathcal{O}(\eps)$, so that either $|\tilde{\alpha}-\alpha_0|=\mathcal{O}(\eps)$ or $|\tilde{\alpha}+\alpha_0|=\mathcal{O}(\eps)$ and we may choose the sign of $\alpha_0$ so that $|\tilde{\alpha}-\alpha_0|=\mathcal{O}(\eps)$.  To prove that $|\tilde{\beta}-\beta_0| = \mathcal{O}(\eps)$, we first bound
  \[
    2|\tilde{\alpha}\tilde{\beta}-\alpha_0\beta_0|
    \leq 
    |(\tilde{\alpha}\tilde{\beta}+\tilde{\lambda})-(\alpha_0\beta_0+\lambda_0)|
    + |(\tilde{\alpha}\tilde{\beta}-\tilde{\lambda}) 
    - (\alpha_0\beta_0 - \lambda_0)| = \mathcal{O}(\eps),
  \]
  where the two terms on the right are bounded by $\mathcal{O}(\eps)$ as they   are terms contributing to $\|\tilde{S}-S_0\|$.   Now since    $|\tilde{\alpha}-\alpha_0|=\mathcal{O}(\eps)$,  the above inequality implies that $|\tilde{\beta}-\beta_0|=\mathcal{O}(\eps)$.  
\end{proof}

\begin{lemma}
\label{lemma3}
 Given that the  Jordan chains of $S_0$ and $\tilde{S}$ are within $\mathcal{O}(\eps)$ of   each other,
it follows that the Jordan chain of $A_0$ and the vectors $\{x_\eps,j_\eps\}$ are within $\mathcal{O}(\eps)$ of  each other.
 \end{lemma}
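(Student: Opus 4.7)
The plan is to push the $\mathcal{O}(\eps)$ closeness from the $2\times 2$ world back up into the full ambient space using the orthonormal bases $U_0$ and $U_\eps$ via a triangle inequality. The main observation is that the Jordan chain of $S_0 = U_0^*A_0U_0$ is nothing but the coordinate representation of the Jordan chain of $A_0$ in the orthonormal basis $U_0$. Indeed, since the columns of $U_0$ span the invariant subspace of $A_0$, we can write $x_0 = U_0 x_0'$ and $j_0 = U_0 j_0'$ for some $x_0', j_0' \in \Complex^2$; applying $U_0^*$ to the identities $A_0 x_0 = \lambda_0 x_0$ and $A_0 j_0 = \lambda_0 j_0 + x_0$ shows that $(x_0', j_0')$ is the Jordan chain of $S_0$. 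By the construction immediately preceding \eqref{tildeS-defn}, we analogously have $x_\eps = U_\eps x_\eps'$ and $j_\eps = U_\eps j_\eps'$ with $(x_\eps', j_\eps')$ the Jordan chain of $\tilde{S}$.

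Next I would invoke the triangle inequality to split the error between the full-space vectors into two pieces:
\[
\|x_0 - x_\eps\| = \|U_0 x_0' - U_\eps x_\eps'\| \leq \|(U_0 - U_\eps) x_0'\| + \|U_\eps (x_0' - x_\eps')\|.
\]
The first term is bounded by $\|U_0 - U_\eps\|\,\|x_0'\|$, which is $\mathcal{O}(\eps)$ by Lemma~\ref{basis-close-lem}, since $\|x_0'\|$ is $\mathcal{O}(1)$ by the normalization in \eqref{jordan-chain-eq}. The second term is bounded by $\|U_\eps\|\,\|x_0' - x_\eps'\|$, which is $\mathcal{O}(\eps)$ by the hypothesis of the lemma together with the fact that $U_\eps$ has orthonormal columns so $\|U_\eps\| = 1$. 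The identical decomposition applied to $j_0 - j_\eps$ yields $\|j_0 - j_\eps\| = \mathcal{O}(\eps)$.

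Finally, I would check that the final normalization step in Algorithm~\ref{xj-alg} does not spoil the estimate: since $\|x_0\| = 1$ exactly and $\|x_\eps\| = 1 + \mathcal{O}(\eps)$, rescaling $x_\eps$ changes it by $\mathcal{O}(\eps)$. A similar argument handles $j_\eps$, using the normalization convention $x_0^* j_0 = 0$ to confirm that $j_\eps$ may be recentered to satisfy the analogous orthogonality condition within $\mathcal{O}(\eps)$ of $j_0$.

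I do not expect any hard obstacle here; the argument is essentially bookkeeping on top of Lemma~\ref{basis-close-lem} and the hypothesis. The one subtlety to watch for is the normalization/gauge freedom: the Jordan vector $j_0$ is only defined modulo multiples of $x_0$, so in principle one must verify that the particular $j_\eps'$ produced in Algorithm~\ref{xj-alg} corresponds, under the identification $j_0 = U_0 j_0'$, to the same choice of $j_0'$ that is within $\mathcal{O}(\eps)$ of $j_\eps'$ (and not to a competing Jordan vector differing by $\mathcal{O}(1)$ times $x_0'$). This is handled by the explicit normalization $x_0^*j_0 = 0$ in both the reduced and ambient problems and by the orthonormality of $U_\eps$.
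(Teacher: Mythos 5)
Your proof is correct and follows essentially the same route as the paper: identify $x_0 = U_0 x_0'$, $j_0 = U_0 j_0'$, then apply the triangle inequality to split $\|U_0 x_0' - U_\eps x_\eps'\|$ into a term controlled by $\|U_0 - U_\eps\|$ (Lemma~\ref{basis-close-lem}) and a term controlled by $\|x_0' - x_\eps'\|$ (the hypothesis). Your added remarks on the final normalization step and the gauge freedom of the Jordan vector are sensible extra care that the paper's proof omits, but they do not change the argument.
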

\begin{proof}
The Jordan chain vectors of the $2\times2$ matrix  $S_0$  are related to the Jordan chain vectors of the large matrix   $A_0$ via:
   $x_0 = U_0  x_0'$,  $j_0 =  U_0  j_0'$.  
Using  the standard triangle inequality, we have
$\|x_0 - x_\eps \| = \|U_0 x'_0 - U_\eps x'_\eps\| 
\leq \|U_0 x'_0 - U_0 x'_\eps\| + \|U_0 x'_\eps - U_\eps x'_\eps\|$.
It follows that
$\|x_0 - x_\eps\| \leq \|U_0\|\|x'_0 - x'_\eps\| + \|U_0 - U_\eps\|\|x_0\|.$
 Both terms on the right-hand side are quantities that we already know are $\mathcal{O}(\eps)$.  The same argument can be used to prove that $\|j'_0 - j'_\eps\|=\mathcal{O}(\eps)$.
\end{proof}



\section{Implementation}
\label{numerical-section}

In this section, we analyze the accuracy of our shift-and-invert-based algorithm and compare it with  the SVD-based algorithm presented  by Mailybaev~\cite{mailybaev2006}.
We apply  both algorithms to dense defective $50\times50$ matrices, which are randomly perturbed by an
amount $\eps$.   Figure~\ref{fig1} shows the relative  errors in eigenvalues and Jordan vectors  as a function of $\eps$. 
We find that both methods   are $\mathcal{O}(\eps)$ accurate. However, our method is scalable for large defective matrices (as demonstrated below), whereas the SVD-based method becomes impractical. 

When the derivative of the matrix $dA/dp$ is known, the accuracy of both algorithms can be improved. 
This can happen, for example,  when $A$ is an operator arising from a physical problem which depends on  $p$ in a known way. An extension of the SVD-based algorithm to incorporate the knowledge of $dA/dp$ is given in~\cite{mailybaev2006}. 
Our algorithm can be improved as follows.  First, we employ the adjoint method to find the value
$p$ for which $\mathcal{A}_\eps \equiv A_\eps + p\,dA/dp = A_0 + O(\eps^2)$. More explicitly, we   compute the derivative $dg/dp$, where $g$ is a function constructed to be equal to 0 at the exceptional point, and  then  take  a single Newton step in $p$ to  obtain $\mathcal{A}_\eps$. More details are given in the appendix, Sec.~\ref{adjoint-section}, (where it is assumed that the matrices $A_0$ and $A_\eps$ are real, but the resulting formula works for both real and complex matrices).
 Then, by applying Algorithms~\ref{subspace-dense-alg}--\ref{xj-alg} to $\mathcal{A}_\eps$, we obtain a Jordan vector to accuracy $\eps^2$, with the additional cost of a single linear solve compared to the first-order method.  Therefore, we refer to the modified algorithm as a second-order method.

\begin{figure}[h!]
  \begin{center}
    \includegraphics[scale=0.6]{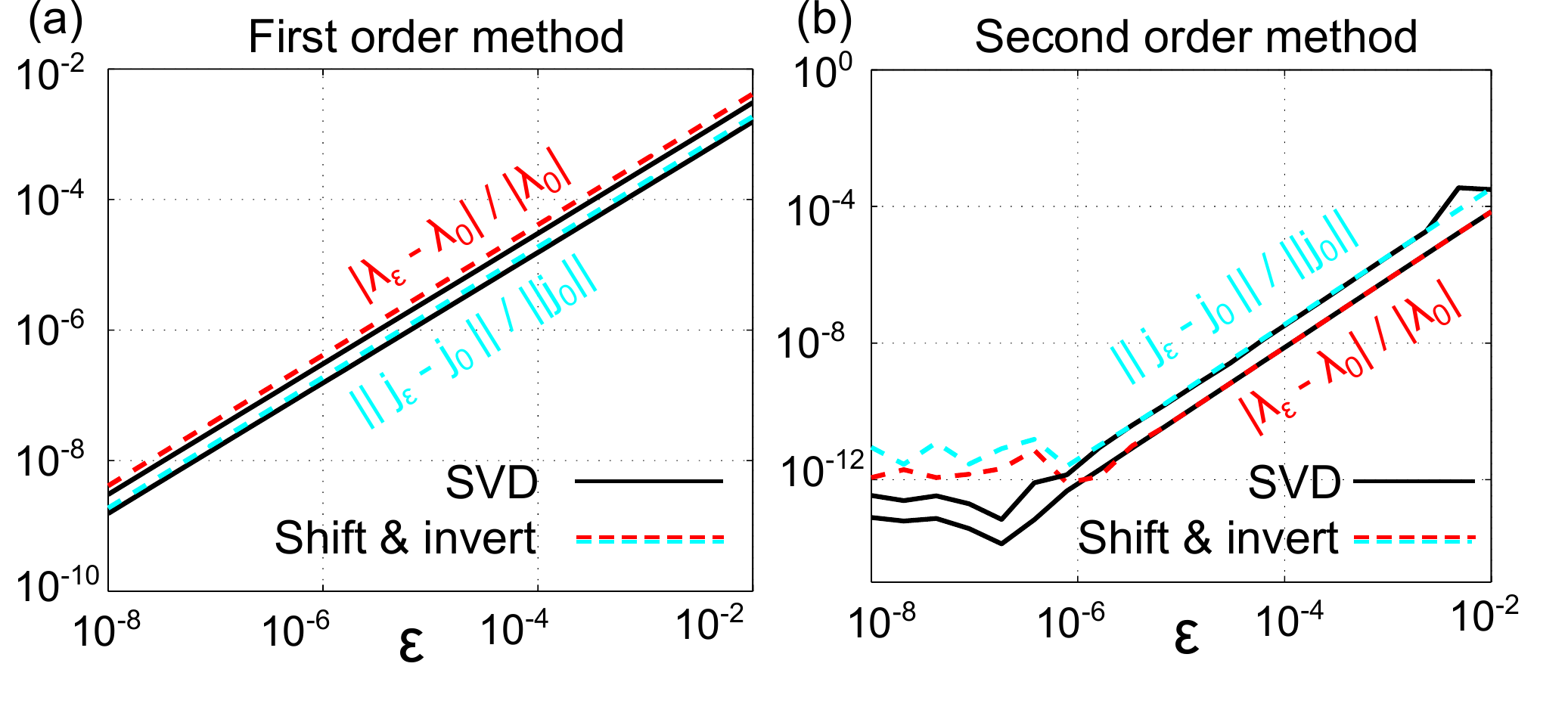}
  \end{center}
  \caption{\textbf{Relative errors in eigenvalues $\mathbf{(|\lambda_\eps-\lambda_0|/|\lambda_0|)}$ and Jordan vectors $\mathbf{(\|j_\eps-j_0|/|j_0\|)}$.}
  (a) Comparison of our shift-and-invert based first-order method (red and cyan dashed lines) and the previous SVD-based algorithm~\cite{mailybaev2006} (black lines).
     All algorithms demonstrate $\mathcal{O}(\eps)$ convergence.
  (b) Comparison of our second-order algorithm, which uses the adjoint method (red and cyan dashed lines), and the second-order SVD-based algorithm (black lines). Both methods demonstrate $\mathcal{O}(\eps^2)$ convergence.}
  \label{fig1}
\end{figure}

The accuracy of our second-order method and a comparison with the SVD-based second-order method are presented in Fig.~1b. Both methods show $\mathcal{O}(\eps^2)$ convergence. Note that a floor of $10^{-12}$ in the relative error is reached due to rounding errors in the construction of the exactly defective matrix $A_0$.

In the remainder of this section, we show an application of our algorithm to  a physical problem of  current interest: Computing spontaneous emission 
rates from  fluorescent molecules which are placed near electromagnetic resonators with  EPs~\cite{Pick2016}. 
The resonant modes of a  system can be  found by solving Maxwell's equations, which can be written in the form of an    eigenvalue problem: $\hat{A}x_n=\lambda_n x_n$~\cite{joannopoulos2011photonic}. Here, Maxwell's  operator is
$\hat{A}\equiv\varepsilon^{-1}\nabla\times\nabla\times\,$ (where $\varepsilon$ is the dielectric permittivity of the medium), $x_n$ are the eigenmodes (field profiles) and the eigenvalues  $\lambda_n\equiv\omega_n^2$ are the squares of the resonant frequencies. 
We  discretize Maxwell operator, $\hat{A}$, using finite differences~\cite{christ1987three,Pick2016} to a matrix $A$.
As recently shown in~\cite{Pick2016}, the spontaneous emission   rate at an EP  can be  accurately computed using a simple formula that includes  the degenerate mode $x_n$, Jordan vector $j_n$,  and eigenvalue $\lambda_n$.  
To demonstrate our algorithm, we computed $x_n, j_n$ and $\lambda_n$ for the numerical example of two coupled plasmonic resonators   (Fig.~\ref{Plasmonics}a). The system consists  of two rectangular silver (Ag) rods covered by a  silica (SiO$_2$) coating, with commensurate distributions of gain and loss in the coating.  
When adding gain and loss, the eigenvalues move in the complex plane  (blue and red lines in Fig.~\ref{Plasmonics}b) and, at a critical amount of gain and loss, they  merge at an  EP (orange dot).  
Fig.~\ref{Plasmonics}c presents approximate   eigenvector $x_\eps$ and  Jordan vector $j_\eps$, computed via our shift-and-invert-based algorithm,
and   normalized so that $x_0^*j_0 = 1$ and $j_0^*j_0 = 0$. (Only the real parts of the vectors are shown in the figure.) 
The matrix $A_\eps$ in this example is  $212^2\times212^2$. An SVD-based approach  would require enormous computational  resources for this relatively small   two-dimensional problem, whereas our algorithm required only a few seconds on a laptop. 
The result was validated in~\cite{Pick2016} by comparing the Jordan-vector perturbation-theory predictions to explicit eigenvector calculations near the EP.

\begin{figure}[h]
\centering
\includegraphics[width=1\textwidth]{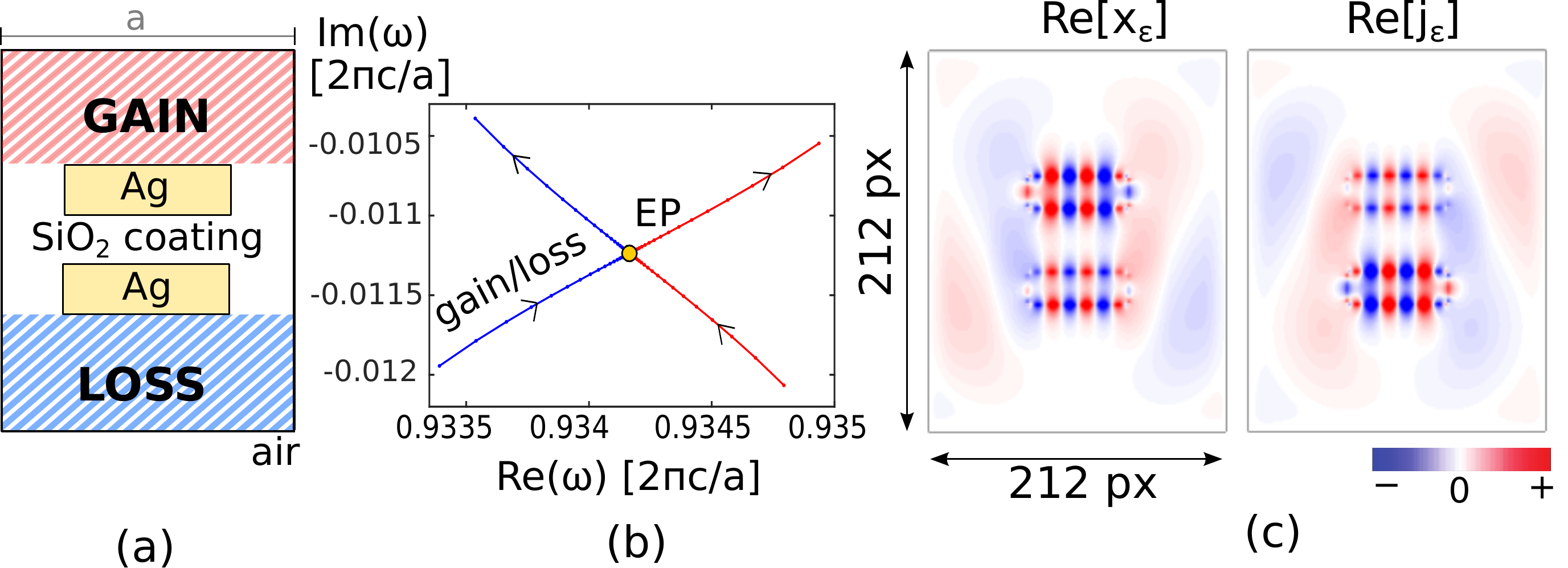}
\caption{\textbf{Application of the algorithm for problems in electromagnetism.}
(a) Schematics of the structure. Two silver rods (Ag) covered by  a silica coating (SiO$_2$).  Gain and loss  are added to the outer sides of the coating. 
(b) By adding gain and loss, the eigenvalues move in the complex plane (blue and red curves) and merge at an EP (orange dot).
(c) The approximate degenerate eigenvector $x_\eps$ and Jordan vector $j_\eps$.
(a) and (b) are borrowed from~\cite{Pick2016}.}
\label{Plasmonics}
\end{figure}


\section{Concluding remarks}
\label{conclusions-section}

In this paper, we presented efficient algorithms for computing Jordan chains of large matrices with a $2\times2$ Jordan block. These tools can be readily applied to  study the limiting behavior of physical systems near second-order exceptional points. 
The presented algorithm can be easily  generalized to handle larger Jordan  blocks by first finding an $N$-dimensional degenerate subspace and a reduced $N\times N$ eigenproblem, and then computing the Jordan chain of its nearest $N\times N$ defective matrix. 
Since $N$ (the algebraic multiplicity of the eigenvalue) will in practice be very small, existing SVD-based methods~\cite{leung1999null,mailybaev2006} can be applied to find this nearby defective $N \times N$ matrix.
Moreover,  defective  nonlinear eigenproblems can be handled using a similar approach with  proper  modification of the chain relations at the EP~\cite{szyld2013several}.

\section*{Appendix}
\label{adjoint-section}
In this section we suppose we know how to compute the derivaive of the matrix $dA/dp$ and we explain how to use the adjoint method~\cite{errico1997adjoint,strang2007computational} in order to find the value
$p$ for which $A_\eps + pdA/dp = A_0 + O(\eps^2)$.  
The derivation in this section assumes that all matrices are real but,
with slight generalizations (some complex conjugations) summarized in Algorithm~\ref{complex-adjoint-alg},
the formula we provide in the end works both for real and complex matrices.

To find the exceptional point, we consider the matrices $U$ and $S$ as 
depending on the parameter $p$.  To do this, we would like to define
$U(p)$ and $S(p)$ by the equations
\begin{align*}
  &A(p)U(p) = U(p)S(p) \\
  &U(p)^TU(p) = Id_2,
\end{align*}
but these do not have a unique solution because of the possibility of
rotating the basis $U$.  When $U$ is complex, there are also phase degrees
of freedom.  To fix this, we can enforce that the first column of $U$, $u_1$,
is orthogonal to some vector $w$.  We will actually choose $w$ such that when
$p=0$, the matrix $S$ is upper triangular.  Thus $U(p)$ and $S(p)$ solve
$f(U,S,p) = 0$, where
\[
  f(U,S,p) = \left( \begin{array}{c}
    Au_1 - s_{11} u_1 - s_{21} u_2 \\
    Au_2 - s_{12} u_1 - s_{22} u_2 \\
    u_1^Tu_1 - 1 \\
    u_1^Tu_2 \\
    u_2^Tu_2 - 1 \\
    u_1^Tw\end{array}\right).
\]
Now we would like to find the value of $p$ such that $S(p)$ is defective and therefore satisfies
\[
  g(S) = (\trace(S)/2)^2 - \det(S) = 0.
\]
By computing $g(0)$ and $dg/dp$, we can find the correct value
of $p$ at which $g(p)=0$ and we accomplish this task by using the adjoint method, as explained below.

Using the chain rule, we have
\begin{equation}
dg/dp = (\partial g/\partial S) (dS/dp).
\label{dgdp-eq}
\end{equation}
The derivative $dS/dp$ can be found from differentiating $f(U,S,p)$; it
satisfies
\[
  \partial f/\partial p + (\partial f/\partial U) dU/dp + 
  (\partial f/\partial S) dS/dp = 0.
\]
Combining the unknowns into a single variable
$X=(u_1,u_2,s_{11},s_{12},s_{21},s_{22})$, the equation simplifies to
\[
  \partial f/\partial p + (\partial f/\partial X) dX/dp = 0.
\]
Substituting  this into~Eq.~\ref{dgdp-eq}, we obtain
\[
  dg/dp = -(\partial g/\partial X) (\partial f/\partial X)^{-1} 
  (\partial f/\partial p).
\]
To compute this, we let 
$\Lambda^T = (\partial g/\partial X) (\partial f/\partial X)^{-1}$, so that
\[
  (\partial f/\partial X)^T \Lambda = (\partial g/\partial X).
\]
The matrix $\partial f/\partial X$ takes the form
\[ \partial f/\partial X = \left(\begin{array}{cccccc}
    A-s_{11} & -s_{21} & -u_1 & 0 & -u_2 & 0 \\
    -s_{12} & A-s_{22} & 0 & -u_1 & 0 & -u_2 \\
    2u_1^T & 0 & 0 & 0 & 0 & 0 \\
    u_2^T & u_1^T & 0 & 0 & 0 & 0 \\
    0 & 2u_2^T & 0 & 0 & 0 & 0 \\
    0 & 0 & 0 & 0 & 1 & 0 
  \end{array}\right).
\]
Moreover the vector $\partial g/\partial X$ has the form
$\partial g/\partial X = (0, 0, h_{11}, h_{12}, h_{21}, h_{22})^T$
(the zeros reflecting the fact that $g$ is independent of $U$), where
\begin{align}
  \partial g/\partial S = 
  \left(\begin{array}{cc}
    h_{11} & h_{12} \\
    h_{21} & h_{22} \end{array}\right) = 
  \left(\begin{array}{cc}
    (s_{11}-s_{22})/2 & s_{21} \\
    s_{12} & (s_{22}-s_{11})/2
  \end{array}\right).
\label{h-definition}
\end{align}
Expanding the adjoint variables as 
$\Lambda = (\lambda_1^T,\lambda_2^T,\sigma_{11},\sigma_{12},\sigma_{22},
\sigma_{21})^T$, we
obtain the adjoint equations
\begin{align}
  (A^T-s_{11})\lambda_1 &=s_{12}\lambda_2-2\sigma_{11}u_1-\sigma_{12}u_2
  \label{lambda1-eq}
  \\(A^T-s_{22})\lambda_2 &= -\sigma_{12}u_1 - 2\sigma_{22}u_2,
  \label{lambda2-eq}
\end{align}
and the normalization equations
\begin{align}
  -u_1^T\lambda_1 &= h_{11} 
  \label{u1h11-eq}\\
  -u_1^T\lambda_2 &= h_{12} 
  \label{u1h12-eq}\\
  -u_2^T\lambda_2 &= h_{22}.
  \label{u2h22-eq}
\end{align}

To solve this system, we first take the dot product of~Eq.~\ref{lambda1-eq}
with $u_1$, resulting in
\begin{align*}
g_1^Tu_1 + s_{12}\lambda_2^Tu_1 - 2\sigma_{11} = 0.
\end{align*}
Now applying~Eq.~\ref{u1h12-eq}, this simplifies to 
\begin{equation}
  \sigma_{11} = \frac{-h_{12}s_{12}}{2}.
\end{equation}
Similarly, taking the dot product of~Eq.~\ref{lambda2-eq} with $u_2$
and using~Eq.~\ref{u2h22-eq}, we find that 
\begin{equation}
  \sigma_{22} = \frac{h_{12}s_{12}}{2}.
\end{equation}
To find $\sigma_{21}$, we take the dot product of Eq.~\ref{lambda2-eq} 
equation with $u_1$ and apply~Eq.~\ref{u1h11-eq},
\begin{align*}
  -(s_{11}-s_{22})h_{12} = 
  (s_{11}-s_{22})\lambda_2^Tu_1 =
  \lambda_2^T (A-s_{22})u_1 =
  - \sigma_{12}.
\end{align*}
This yields
\begin{align}
  \sigma_{12} = (s_{11}-s_{22})h_{12}.
\end{align}

Now that $\sigma_{ij}$ are known, we can solve for $\lambda_2$ and then 
for $\lambda_1$ using the first two equations.  Then we must add back some
multiple of the left eigenvectors of $A^T$ to $\lambda_2$ and $\lambda_1$ 
so that they satisfy the other two normalization conditions.
These steps are summarized in Algorithm~\ref{real-adjoint-alg}.
\begin{algorithm}
  \caption{The adjoint method for finding $p$}
  \label{real-adjoint-alg}
  \begin{algorithmic}[1]
    \State $\sigma_{12}\gets -(s_{11}-s_{22})h_{12}$ ($h$ is defined in Eq.~\ref{h-definition})
    \State $\sigma_{22} \gets -s_{12}h_{12}$
    \State Solve $(A-s_{22})^T\lambda_2  = \sigma_{12} u_1 - \sigma_{22} u_2$.
    \State Add a multiple of the left eigenvector of $A$ to $\lambda_2$
    so that $u_2^T\lambda_2 = -h_{22}$.
    \State Solve $(A-s_{11})^T\lambda_1 = s_{12}\lambda_2 - \sigma_{22} u_1
    - \sigma_{12} u_2$.
    \State Add a multiple of the left eigenvector of $A$ to $\lambda_1$
    so that $u_2^T\lambda_1 = -h_{21}$.
    \State Compute $dg/dp = - \lambda_1^T(dA/dp)u_1 - \lambda_2^T(dA/dp)u_2$.
    \State Set $p = -g(0)/(dg/dp)$.
  \end{algorithmic}
\end{algorithm}

\newpage

To derive the adjoint method for complex $A$, one can split the problem into
real and imaginary parts.  The resulting computation is described in 
Algorithm~\ref{complex-adjoint-alg}.
\begin{algorithm}
  \caption{The adjoint method with complex $A$}
  \begin{algorithmic}[1]
    \State Solve $2\alpha_1 - s_{12}\beta_4 i  = -s_{12}(g_S)_{12}$
    for real $\alpha_1$ and $\beta_4$.
    \State Set $a = -\overline{s_{11}-s_{22}}(-(g_S)_{12} + \beta_4 i)$.
    \State Set $\overline{b} = (s_{12}(g_S)_{11}-(g_S)_{22} -
    \overline{a}$.
    \State Solve $2\alpha_4 - \beta_3 i = s_{12}(g_S)_{12} + i\beta_4$
    for real $\alpha_4$ and $\beta_3$.
    \State Solve $(A-\overline{s_{22}})^* \lambda_2 = 
    -au_1 - (2\alpha_4 + \beta_3 i)u_2$.
    \State Add a left eigenvector of $A$ to $\lambda_2$ so that 
    $u_2^*\lambda_2 = -(g_S)_{22}$.
    \State Solve $(A-\overline{s_{11}})^* \lambda_1 = 
    s_{12}\lambda_2 - 2\alpha_1u_1 - (\overline{a+b})u_2$.
    \State Add a left eigenvector of $A$ to $\lambda_1$ so that 
    $u_2^*\lambda_1 = -(g_S)_{21}$
    \State Compute $dg/dp = -\lambda_1^*(dA/dp)u_1 -\lambda_2^*(dA/dp)u_2$.
    \State Set $p = -g(0) / (dg/dp)$.
  \end{algorithmic}
  \label{complex-adjoint-alg}
\end{algorithm}


\section*{Acknoledgement}
The authors would like to thank Gil Strang and Alan Edelman  for helpful discussions. This work was partially supported by the Army Research Office through the Institute for Soldier Nanotechnologies under Contract \\No.~W911NF-13-D-0001.


\end{document}